\def\eref#1{(\ref{#1}%
%, {\tt {#1}}%
)}
\def\RSref#1{\ref{#1}%
% ({\tt {#1}}) %               % to be removed later
}
\def\RSlabel#1{\label{#1}%
% ({\tt {#1}})%                 % to be removed later
}
\def\RScite#1{\cite{#1}%
%, {\tt {#1}}%                 % to be removed later 
}
\newcommand{\bql}[1]{%
% \hfill {\tt ({#1})}%  to be removed later 
\begin{equation}\label{#1}%
}
\def\filename#1{}
\newcommand{\eq}{\end{equation}}
\def\fa{\hbox{ for all }}
\def\dfrac#1#2{\displaystyle{\frac{#1}{#2}   }}
\def\b1{\mathbf 1}
\newcommand{\R}{\ensuremath{\mathbb{R}}}
\def\boc{\mathbf{c}}	
\def\boe{\mathbf{e}}	
\def\boA{\mathbf{A}}	
\def\boB{\mathbf{B}}	
\def\boC{\mathbf{C}}	
\def\bou{\mathbf{u}}	
\def\bof{\mathbf{f}}	
\def\bog{\mathbf{g}}	
\def\boI{\mathbf{I}}
\def\bor{\mathbf{r}}
\def\bov{\mathbf{v}}
\def\biglf{\par\bigskip\noindent}
\newcommand{\red}[1]{{\color{red} #1 }}     % RS
\newtheorem{definition}{Definition}
\newtheorem{theorem}{Theorem}
\begin{document}
\begin{center}
{\bf Error Analysis of Nodal Meshless Methods}
\biglf 
Robert Schaback \\
Univ. Göttingen\\
schaback@math.uni-goettingen.de\\
http://num.math.uni-goettingen.de/schaback/research/group.html
\biglf
Dedicated to the memory of Ted Belytschko
\biglf
Version of Nov. 11, 2015
\end{center}

{\bf Abstract:} There are many application papers that
solve elliptic boundary value problems by meshless methods, and they use 
various forms of generalized stiffness matrices that 
approximate derivatives of functions
from values at scattered nodes
$x_1,\ldots,x_M\in \Omega\subset\R^d$. 
If $u^*$ is the true solution in some Sobolev space $S$ allowing
enough smoothness for the problem in question, 
and if the calculated approximate values at the nodes are
denoted by $\tilde u_1,\ldots,\tilde u_M$, the canonical form of error bounds
is 
$$
\max_{1\leq j\leq M}|u^*(x_j)-\tilde u_j|\leq \epsilon \|u^*\|_S
$$
where $\epsilon$ depends crucially on the problem and the discretization,
but not on the solution. 
This 
contribution shows how to calculate 
such $\epsilon$ {\em numerically and explicitly}, for any 
sort of discretization of strong problems via nodal values, may
the discretization use Moving Least Squares, unsymmetric or symmetric
RBF collocation, or localized RBF or polynomial stencils.
This allows users to compare different discretizations
with respect to error bounds of the above form, without knowing exact
solutions, and admitting all possible ways to
set up generalized stiffness matrices. The error analysis is proven to be sharp 
under mild additional assumptions. As a byproduct, it allows to
construct worst cases that push discretizations to their limits.
All of this is illustrated by numerical examples.

%****************************************************************
\section{Introduction}\RSlabel{SecIntro}
Following the seminal survey \cite{belytschko-et-al:1996-1}
by Ted Belytschko et.al. in 1996,  meshless methods for PDE solving often work 
``{\it entirely in terms of values at nodes}''.
This means that large linear systems 
are set up that have values
$u(x_1),\ldots,u(x_M)$ of an unknown function $u$ as unknowns,
while the equations model the underlying PDE problem in  discretized way.
Altogether, the discrete problems have the form
\bql{eqnaivesys}
\displaystyle{\sum_{j=1}^M a_{kj}u(x_j)\approx f_k,\;1\leq k\leq N   } 
\eq
with $N\geq M$, whatever the underlying PDE problem is, and the 
$N\times M$ matrix $\boA$ with entries $a_{kj}$ 
can be called a {\em generalized stiffness matrix}.
\biglf
Users solve the system somehow and 
then get values $\tilde u_1,\ldots,\tilde u_M$  that satisfy 
$$
\displaystyle{\sum_{j=1}^M a_{kj}\tilde u_j\approx f_k,\;1\leq k\leq N   }, 
$$
but they should know how far these values are from the values
$u^*(x_j)$ of the true solution of the PDE problem that is supposed to exist.
\biglf
The main goal of this paper
is to provide tools that allow users to assess the
quality of their discretization, no matter
how the problem was discretized or how the system was actually solved. 
The computer should tell the user whether the 
discretization
is useful or not. It will turn out that this is possible, and at 
tolerable 
computational cost that is proportional to the 
complexity for setting up the system, not for solving it.
%and certain instabilities that will hopefully be overcome by
%future research.
\biglf
The only additional ingredient is a specification of 
the smoothness of the true
solution $u^*$, and this is done in terms of a strong norm 
$\|.\|_S$, e.g. a higher-order Sobolev norm or seminorm. The whole problem
will then be implicitly scaled by $\|u^*\|_S$, and we assert an
absolute bound of the form 
$$
\max_{1\leq j\leq M}|u^*(x_j)-\tilde u_j|\leq \epsilon \|u^*\|_S
$$
or a relative bound
$$
\dfrac{\max_{1\leq j\leq M}|u^*(x_j)-\tilde u_j|}{\|u^*\|_S}\leq \epsilon
$$
with an entity $\epsilon$ that can be calculated. It will be a product of 
two values caring for 
{\em stability}  and {\em consistency}, respectively, and these are calculated
and analyzed separately. 
\biglf
Section \RSref{SecPro} will set up the large range of PDE 
or, more generally, operator equation problems we are 
able to handle, and Section \RSref{SecEA} provides the
backbone of our error analysis. It must naturally contain some
versions of {\em consistency} and {\em stability}, and we deal 
with these in
Sections   \RSref{SecCA} and \RSref{SecSA}, with an interlude on 
polyharmonic kernels in Section 
\RSref{SecAbPHK}. For given Sobolev smoothness order
$m$, these provide stable, sparse, and error-optimal nodal approximations
of differential operators.   
Numerical examples
follow in Section \RSref{SecEx}, demonstrating how to work with the tools of
this paper. It turns out that the evaluation of stability is 
easier than expected, while the evaluation of consistency 
often suffers from severe numerical cancellation that is to be overcome
by future research, or that is avoided by using special 
scale-invariant approximations,
e.g. via polyharmonic kernels along the lines of Section 
\RSref{SecAbPHK}.
%****************************************************************
\section{Problems and Their Discretizations}\RSlabel{SecPro}
We have to connect the system
\eref{eqnaivesys} back to the original PDE problem, and we 
do this in an unconventional but useful way that we use  
successfully 
since \RScite{schaback-wendland:1999-1} in 1999.
%%%%%%%%%%%%%%%%%%%%%%%%%
\subsection{Analytic Problems}\RSlabel{SecAnaPro}
For example, 
consider a model boundary value problem of the form
\bql{eqDP}
\begin{array}{rcll}
L u &=& f & \hbox{ in } \Omega\subset\R^d\\ 
B u &=& g & \hbox{ in } \Gamma:=\partial \Omega 
\end{array}
\eq
where $f, \;g$ are given functions on $\Omega$ and $\Gamma$, respectively, 
and $L,\;B$ are linear operators, defined and continuous
on some normed linear space $U$ in which the true solution $u^*$ should lie.
Looking closer, this is
an infinite number of linear constraints
$$ 
\begin{array}{rcll}
L u(y) &=& f(y) & \fa y\in \Omega\subset\R^d\\ 
B u (z) &=& g(z)  & \fa z\in \Gamma:=\partial \Omega 
\end{array} 
$$
and these can be generalized as infinitely many 
linear functionals acting on the function $u$, namely
\bql{eqConPro}
\lambda(u)=f_\lambda \fa \lambda \in \Lambda\subset U^*
\eq
where the set $\Lambda$ is contained in the topological dual $U^*$ of $U$,
in our example
\bql{eqlamLmuB}
\Lambda=\{\delta_y\circ L,\;y\in \Omega  \}
\cup \{\delta_z\circ B,\;z\in \Gamma  \}.
\eq
\begin{definition}\RSlabel{DefProb}
An admissible problem in the sense of this paper
consists in finding an 
$u$ from some normed linear space 
$U$ such that \eref{eqConPro} holds 
for a fixed set $\Lambda\subset U^*$.
Furthermore, solvability via $f_\lambda=\lambda(u^*)
\fa \lambda \in \Lambda\subset U^*$ for some $u^*\in U$
is always assumed. 
\end{definition} 
Clearly, this allows various classes
of differential equations and boundary
conditions. in weak or strong form. For examples, see \RScite{schaback:2015-2}.
Here, we just mention that the standard functionals for weak problems with
$L=-\Delta$ are of the form 
\bql{eqlamweak}
\lambda_v(u):=\int_\Omega (\nabla u)^T \nabla v
\eq
where $v$ is an arbitrary test function from $W_0^1(\Omega)$.
%****************************************************************
\subsection{Discretization}\RSlabel{SecDis}
The connection of the problem \eref{eqConPro} 
to the discrete linear system \eref{eqnaivesys}
usually starts with specifying a 
finite subset $\Lambda_N=\{\lambda_1,\ldots,\lambda_N\}\subset\Lambda$
of {\em test} functionals. But then it splits into  
two essentially different branches. 
\biglf
The {\em shape function} approach defines functions
$u_j\;:\;\Omega\to \R$ with the Lagrange property
$u_i(x_j)=\delta_{ij},\;1\leq i,j\leq M$ and defines the elements
$a_{kj}$ of the stiffness matrix as
$a_{kj}:=\lambda_k(u_j)$. This means that 
the application of the functionals $\lambda_k$ on {\em trial functions}
$$
u(x)=\sum_{j=1}^Mu(x_j)u_j(x)
$$ 
is exact, and the linear system \eref{eqnaivesys} describes the exact 
action of the selected test functionals on the trial space.
Typical instances of the shape function approach 
are standard applications
of Moving Least Squares (MLS) trial functions
\RScite{wendland:2000-1,armentano:2001-1,armentano-duran:2001-1}.
Such applications were surveyed in 
\RScite{belytschko-et-al:1996-1} and incorporate many versions of
the Meshless Local Petrov Galerkin (MLPG) technique
\RScite{atluri-zhu:1998-1}. Another popular shape function method is
unsymmetric or symmetric kernel-based collocation, see % starting with
\RScite{kansa:1986-1,fasshauer:1997-1,%
franke-schaback:1998-2a,franke-schaback:1998-1}.
\biglf
But one can omit shape functions completely, at the cost of
sacrificing exactness. Then the selected functionals $\lambda_k$
are each approximated by linear combinations of the functionals
$\delta_{x_1},\ldots,\delta_{x_M}$ by requiring
\bql{eqlamku},
\lambda_k(u)\approx \displaystyle{\sum_{j=1}^M a_{kj}\delta_{x_j}u=
\sum_{j=1}^M a_{kj}u(x_j),\;1\leq k\leq
  N,\;\fa u\in U.  } 
\eq
This approach can be called {\em direct} discretization, because it
bypasses shape functions. It is the standard technique for 
{\em generalized
finite differences} (FD) \RScite{YC:MG80}, and it comes up again in meshless
methods at many places, starting with \RScite{nayroles-et-al:1981,tolstykh:2003-1}
and called {\em RBF-FD} 
or {\em local RBF collocation} by various authors, e.g.
\RScite{wright-fornberg:2006-1,flyer-et-al:2015-1}.
The generalized finite difference approximations 
may be calculated via radial kernels using 
local selections of nodes only
\RScite{sarler:2007-1,yao-et-al:2010-1,yao-et-al:2011-1},
and there are papers on how to calculate such approximations, e.g.
\RScite{davydov-schaback:2015-1,larsson-et-al:2013-1}.
Bypassing Moving Least Squares trial functions, direct methods
in the context of Meshless Local Petrov Galerkin
techniques are in \RScite{mirzaei-et-al:2012-1,mirzaei-schaback:2013-1},
connected to {\em diffuse derivatives} \RScite{nayroles-et-al:1981}. 
For a mixture of kernel-based and MLS techniques, see
\RScite{kim-kim:2003-1}. 
\biglf
This contribution will work in both cases, with a certain preference for the 
direct approach. The paper \RScite{schaback:2015-2} 
focuses on shape function methods instead. It proves that uniform stability
can be achieved for all well-posed problems by choosing a suitable
discretization, and then convergence can be inferred from
standard convergence rates of approximations of derivatives of the true solution
from derivatives of trial functions.  The methods of \RScite{schaback:2015-2}
fail for direct methods, and this was the main reason to write this paper.
%****************************************************************
\subsection{Nodal Trial Approximations}\RSlabel{SecNotNodApp}
In addition to Definition
\RSref{DefProb} 
we now assume that $U$ is a space of functions on some set $\Omega$, and that 
point evaluation is continuous, i.e. $\delta_x\in U^* \fa x\in\Omega$.
We fix a finite set $X_M$ of $M$ {\em nodes}  $x_1,\ldots,x_M$ and 
denote the span of the functionals $\delta_{x_j}$ by $D_M$. 
\biglf
For each $\lambda\in \Lambda$ we consider a linear approximation 
$\tilde \lambda$ 
to $\lambda$ from $D_M$, i.e.
\bql{eqlamalu}
\lambda(u) \approx \sum_{j=1}^M a_j(\lambda)u(x_j)=:\tilde \lambda (u) 
\eq
Note that there is no trial space of functions, and no {\em shape 
functions} at all, just {\em nodal values} and approximations of functionals
from nodal values. It should be clear how the functionals in \eref{eqlamLmuB} 
can be approximated as in \eref{eqlamalu} via values at nodes. 
\biglf
In the sense of the preceding section, this looks like a {\em direct}
discretization, but it also covers the {\em shape function} approach, because
it is allowed to take $a_j(\lambda)=\lambda(u_j)$ for shape functions $u_j$ with
the Lagrange property. 
%****************************************************************
\subsection{Testing}\RSlabel{SecTest}
Given a nodal trial approximation, consider a finite subset $\Lambda_N$
of functionals $\lambda_1,\ldots,\lambda_N$ and pose the
possibly overdetermined linear system
\bql{eqLinSys}
\lambda_k(u^*)=f_{\lambda_k}=
\displaystyle{\sum_{j=1}^M a_j(\lambda_k)u_j   } 
\eq
for unknown {\em nodal values} $u_1,\ldots,u_M$ that may be interpreted as
approximations to $u^*(x_1),\ldots,u^*(x_M)$.  We 
call $\Lambda_N$ a {\em test selection} of functionals, and remark that 
we have obtained a system 
of the form \eref{eqnaivesys}. 
\biglf
For what follows, we write the linear system \eref{eqLinSys} 
in matrix form als
\bql{eqfAu}
\bof=\boA \bou
\eq 
with 
$$ 
\begin{array}{rcll}
\boA&=&(a_j(\lambda_k))_{1\leq k\leq N,1\leq j\leq M}&\in \R^{N\times M}\\
\bof&=&(f_{\lambda_1},\ldots,f_{\lambda_N})^T&\in \R^N\\
\bou&=&(u(x_1),\ldots,u(x_M))^T&\in \R^M.
\end{array}
$$
Likewise, we denote the vector of exact nodal values $u^*(x_j)$ by 
$\bou^*$, and $\tilde \bou$ will be the vector of nodal values $\tilde u_j$ 
that
is obtained by some numerical method that solves the system \eref{eqLinSys}
approximately. 
\biglf
It is well-known \RScite{hon-schaback:2001-1} that square systems 
of certain meshless methods may be singular, but it is also known
\RScite{schaback:2015-2} that one can bypass that problem by 
{\em overtesting}, i.e. choosing $N$ larger than $M$.
This leads to overdetermined systems, but they can be
handled by standard methods like the MATLAB backslash 
in a satisfactory way. Here, we expect that users set up their 
$N\times M$ stiffness matrix $\boA$ by sufficiently thorough testing,
i.e. by selecting many test functionals $\lambda_1,\ldots,\lambda_N$ 
so that the matrix has rank $M\leq N$. Section \RSref{SecSA} will show that
users can expect good stability if they handle
a well-posed problem with sufficient overtesting. 
Note further that for cases like the standard Dirichlet
problem \eref{eqDP}, the set $\Lambda_N$ has to contain a reasonable mixture
of functionals connected to the differential operator and functionals connected
to boundary values. Since we focus on general 
worst-case error estimates here,
insufficient overtesting and an unbalanced mixture of boundary and differential
equation approximations will result in error bounds that either 
cannot be calculated due to rank loss
or come out large. The computer should reveal 
whether a discretization is good or
not.
%****************************************************************
\section{Error Analysis}\RSlabel{SecEA}
The goal of this paper is to derive 
useful bounds for 
$\|\bou^*-\tilde \bou\|_\infty$, but we do not care 
for an error analysis away from the nodes. Instead, 
we assume a {\em postprocessing step}
that interpolates the elements of $\tilde \bou$ to generate an approximation 
$\tilde u$ to the solution $u^*$ in the whole domain. Our analysis will
accept any numerical solution $\tilde \bou$ in terms of nodal values
and provide an error bound with small additional computational effort.
%****************************************************************
\subsection{Residuals}\RSlabel{Secres}
We start with evaluating the {\em residual}  $\bor:=\bof-\boA\tilde\bou\in \R^N$
no matter how the numerical solution $\tilde \bou$ was obtained. 
This can be explicitly done except for roundoff errors, and needs no 
derivation of upper bounds. Since in general  
the final error at the nodes will be larger than the 
observed residuals, users should refine their discretization when they 
encounter residuals that are very much larger than the expected error in the
solution.
%****************************************************************
\subsection{Stability}\RSlabel{SecStabEx}
In Section \RSref{SecTest} we postulated that
users calculate an 
$N\times M$ stiffness matrix $\boA$ that has no rank loss.
Then the {\em stability constant}
\bql{eqCSC}
C_S(\boA):=\displaystyle{\sup_{\bou\neq 0}
\dfrac{\|\bou\|_p}{\|\boA\bou\|_q}   }
\eq
is finite for any choice of discrete norms $\|.\|_p$ and $\|.\|_q$
on $\R^M$ and $\R^N$, respectively, with $1\leq p,q\leq \infty$
being fixed here, and dropped from the notation.
In principle, this constant can be explicitly calculated for standard norms,
but we refer to Section \RSref{SecSA} on 
how it is treated in theory and practice. We shall mainly focus on 
well-posed cases where
$C_S(\boA)$
can be expected to be reasonably bounded,
while norms of $\boA$ get very large. This implies that the ratios 
$\|\bou\|_p/\|\boA \bou\|_q$ can vary in a wide range limited by
\bql{eqrange}
\|\boA\|^{-1}_{q,p}\leq \dfrac{\|\bou\|_p}{\|\boA \bou\|_q}\leq C_S(\boA).
\eq
If we assume that we can deal with the stability constant $C_S(\boA)$,
the second step of error analysis is
\bql{eqfirststep}
\begin{array}{rcl}
\|\bou^*-\tilde\bou\|_p 
&\leq& 
C_S(\boA)\|\boA(\bou^*-\tilde\bou)\|_q\\
&\leq& 
C_S(\boA)(\|\boA\bou^*-\bof\|_q+
\|\bof-\boA\tilde\bou\|_q)\\
&\leq& 
C_S(\boA)(\|\boA\bou^*-\bof\|_q+
\|\bor\|_q)\\
\end{array}
\eq
and we are left to handle the  
{\em consistency term} $\|\boA\bou^*-\bof\|_q$
that still contains the unknown true solution $\bou^*$.
Note that $\bof$ is not necessarily in the range of $\boA$,
and we cannot expect to get zero residuals $\bor$.
%\biglf
%Note that in unlucky cases the right-hand side of
%\eref{eqfirststep} can be like
%$\|\boA\|_{q,p}\|\bou^*-\tilde\bou\|_p$ with a rather large norm $\|\boA\|_{q,p}$.
%****************************************************************
\subsection{Consistency}\RSlabel{SecConS}
For all approximations \eref{eqlamalu} 
we assume that there is a {\em consistency} error bound
\bql{eqconserr}
|\lambda(u)-\tilde \lambda (u)|\leq c(\lambda)\|u\|_S
\eq
for all $u$ in some {\em regularity} subspace $U_S$ of $U$
that carries a strong norm or seminorm $\|.\|_S$.  
In case of a seminorm, we have to assume that the approximation $\tilde \lambda$
is an exact approximation to $\lambda$ on the nullspace of the semminorm,
but we shall use seminorms only in Section \RSref{SecAbPHK} below. 
If the solution $u^*$ 
has plenty of smoothness, one may expect
that $c(\lambda)\|u^*\|_S$ is small, provided that the 
discretization quality keeps up with the smoothness. 
In section \RSref{SecCA},
we shall consider cases
where the $c(\lambda)$ can be calculated explicitly.
\biglf
The bound \eref{eqconserr} now specializes to 
$$
\|\boA\bou^*-\bof\|_q\leq \|\boc\|_q \|u^*\|_S
$$
with the vector
$$
\boc=(c(\lambda_1),\ldots,c(\lambda_N))^T\in \R^N,
$$
and the error in \eref{eqfirststep} is bounded absolutely by
$$
\|\bou^*-\tilde\bou\|_p 
\leq C_S(\boA)\left(\|\boc\|_q \|u^*\|_S+
\|\bor\|_q\right)
$$
and relatively by
\bql{eqsecondstep}
\dfrac{\|\bou^*-\tilde\bou\|_p}{\|u^*\|_S} 
\leq C_S(\boA)\left(\|\boc\|_q +
\dfrac{\|\bor\|_q}{\|u^*\|_S}\right).
\eq
This still contains the unknown solution $u^*$. But in kernel-based 
spaces, there are ways to get estimates of $\|u^*\|_S$ via
interpolation. A strict but costly way is to interpolate the data vector $\bof$
by symmetric kernel collocation to get a function $u^*_\bof$ with
$\|u^*_\bof\|_S\leq \|u^*\|_S$, and this norm can be plugged into
\eref{eqsecondstep}. In single applications, users would prefer to
take the values of $u^*_\bof$ in the nodes as results, since they are known
to be error-optimal \RScite{schaback:2015-3}. But if discretizations with
certain given matrices $\boA$ are to be evaluated or compared, this suggestion makes
sense to get the right-hand side of \eref{eqsecondstep}
independent of $u^*$.
%****************************************************************
\subsection{Residual Minimization}\RSlabel{SecResMin}
To handle the awkward final term in \eref{eqsecondstep}
without additional calculations,
we impose a rather weak additional condition on the numerical procedure
that produces $\tilde \bou$  as an approximate solution to \eref{eqfAu}.
In particular, we require
\bql{eqAufKAuf}
\|\boA\tilde \bou-\bof\|_q \leq K(\boA)\|\boA\bou^*-\bof\|_q,
\eq
which can be obtained with $K(\boA)=1$ if $\tilde \bou$
is calculated via minimization of the residual  $\|\boA\bou-\bof\|_q$
over all $\bou\in\R^M$, or with $K(\boA)=0$ if $\bof$ is in the range
of $\boA$. Anyway, we assume that users have a way to
solve the system \eref{eqfAu} approximately such that
\eref{eqAufKAuf} holds with a known and moderate constant $K(\boA)$.
\biglf
Then \eref{eqAufKAuf} implies
$$
\begin{array}{rcl}
\|\bor\|_q &=& \|\boA\tilde \bou-\bof\|_q \\
&\leq & 
K(\boA)\|\boA\bou^*-\bof\|_q\\
&\leq & 
K(\boA)\|\boc\|_q\|u^*\|_S\\
\end{array}
$$
and bounds $\|\bor\|_q$ in terms of $\|u^*\|_S$.
%****************************************************************
\subsection{Final Relative Error Bound}\RSlabel{SecFREB}
\begin{theorem}\RSlabel{the3step}
Under the above assumptions,
\bql{eqthirdstep}
\dfrac{\|\bou^*-\tilde\bou\|_p}{\|u^*\|_S} 
\leq (1+K(\boA))C_S(\boA)\|\boc\|_q.
\eq
\end{theorem} 
\begin{proof}
We can insert \eref{eqAufKAuf} directly into 
\eref{eqfirststep} to get
$$
\begin{array}{rcl}
\|\bou^*-\tilde\bou\|_p 
&\leq & 
C_S(\boA)(1+K(\boA))\|\boA\bou^*-\bof\|_q\\
&\leq & 
(1+K(\boA))C_S(\boA)\|\boc\|_q\|u^*\|_S\\
\end{array}
$$
and finally \eref{eqthirdstep},
where now all elements of the right-hand side are accessible. 
\end{proof}
This is as far as one can go, not having any additional
information on how $u^*$ scales. The final form of 
\eref{eqthirdstep} shows the classical elements
of convergence analysis, since the right-hand side consists of
a {\em stability} term $C_S(\boA)$ and 
a {\em consistency} term $\|\boc\|_q$.   The factor 
$1+K(\boA)$ can be seen as a {\em computational accuracy} term. 
\biglf
Examples in Section \RSref{SecEx} will show how these 
relative error bounds work in practice. Before that,
the next sections will demonstrate theoretically
why users can expect that the ingredients of 
the bound in \eref{eqthirdstep} can be expected to be small. 
For this analysis, we shall 
assume that users know which regularity the true solution has,
because we shall have to express
everything in terms of $\|u^*\|_S$. 
\biglf
At this point, some remarks on error bounds 
should be made, because papers focusing on applications of meshless methods 
often contain one of the two standard crimes of error assessment.
\biglf
The first is to take a problem with a known solution $u^*$ 
that supplies the data, calculate nodal values $\tilde \bou$ 
by some hopefully new method and then
compare with $\bou^*$ to conclude that the method is good because
$\|\bou^*-\tilde \bou\|$ is small. But the method may be 
intolerably unstable. If the input is changed
very slightly, it may produce a seriously different numerical solution
$\hat\bou$ that reproduces the data as well as  $\tilde \bou$. 
The ``quality'' of the result $\tilde\bou$ may be just lucky,
it does not prove anything about the method used.
\biglf
The second crime, usually committed when there is no explicit
solution known, is to evaluate residuals $\bor=\boA\tilde\bou-\bof$ 
and to conclude that  $\|\bou^*-\tilde \bou\|$ is small
because residuals are small. This also ignores stability.
There even are papers that claim convergence of methods 
by showing that residuals converge to zero when the discretization is refined.
This reduces convergence rates of a PDE solver to 
rates of consistency, again ignoring stability problems that may counteract
against good consistency. Section \RSref{SecEx} will
demonstrate this effect by examples.
\biglf
This paper will avoid these crimes, but on the downside 
our error analysis is a worst-case theory that will necessarily overestimate 
errors of single cases.
%%%%%%%%%%%%%%%%%%%%%%%
%****************************************************************
\subsection{Sharpness}\RSlabel{SecSh}
In particular, if users take a specific problem \eref{eqDP} 
with data functions $f$ and $g$ and a known solution $u^*$, and if they
evaluate the observed error and the bound
\eref{eqthirdstep}, they will often see quite an overestimation of the error. 
This is due to the fact that they have a special case that is far away from
being worst possible for the given PDE discretization, and this is 
comparable to a lottery win, as we shall prove now.
\begin{theorem}\RSlabel{thesharp} 
For all $K(\boA)>1$ there is some $u^*\in U_S$ 
and an admissible solution vector $\tilde\bou$ satisfying
\eref{eqAufKAuf} such that
\bql{eqsharp}
(K(\boA)-1)C_S(\boA)\|u^*\|_S\|\boc\|_\infty\leq
\|\bou^*-\tilde\bou\|_\infty\leq (K(\boA)+1)C_S(\boA)\|u^*\|_S\|\boc\|_\infty
\eq
showing that the above worst-case 
error analysis cannot be improved much.
\end{theorem}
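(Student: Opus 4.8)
The upper bound in \eref{eqsharp} needs no new work: it is exactly Theorem \RSref{the3step} specialized to $p=q=\infty$, and it holds for \emph{every} admissible $\tilde\bou$. So the real task is to exhibit a single pair $(u^*,\tilde\bou)$ for which the error is pushed up to the left-hand side of \eref{eqsharp}. The plan is to make the \emph{consistency} estimate \eref{eqconserr}, the \emph{stability} estimate \eref{eqCSC}, and the residual condition \eref{eqAufKAuf} tight simultaneously, exploiting that these three choices are essentially decoupled. I would first secure the stability maximizer: since $\boA$ has full column rank, $\bou\mapsto\|\boA\bou\|_q$ is continuous and strictly positive on the compact sphere $\{\|\bou\|_p=1\}$, so the supremum defining $C_S(\boA)$ is attained by some $\bov\neq 0$ with $\|\bov\|_p=C_S(\boA)\|\boA\bov\|_q$. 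This $\bov$ is fixed once and for all and depends on $\boA$ alone.

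Next I would realize the consistency constant. Let $k^*$ be an index with $c(\lambda_{k^*})=\|\boc\|_\infty$. Under the mild assumption that $c(\lambda_{k^*})$ is the \emph{sharp} constant in \eref{eqconserr} and is attained, choose $u^*\in U_S$ with $|\lambda_{k^*}(u^*)-\tilde\lambda_{k^*}(u^*)|=c(\lambda_{k^*})\|u^*\|_S$. Writing $\boe:=\boA\bou^*-\bof$, its $k$-th entry equals $-(\lambda_k-\tilde\lambda_k)(u^*)$, so $|e_k|\le c(\lambda_k)\|u^*\|_S\le\|\boc\|_\infty\|u^*\|_S$ for all $k$, while $|e_{k^*}|=\|\boc\|_\infty\|u^*\|_S$. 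Hence $\|\boe\|_\infty=\|\boc\|_\infty\|u^*\|_S$ exactly, and this choice of $u^*$ is independent of $\bov$.

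Finally I would align the error with $\bov$ and tighten the residual. Set $\tilde\bou:=\bou^*-s\bov$ with $s:=(K(\boA)-1)\|\boe\|_q/\|\boA\bov\|_q$, which is positive precisely because $K(\boA)>1$. Then $\bou^*-\tilde\bou=s\bov$, so by the defining property of $\bov$,
\[
\|\bou^*-\tilde\bou\|_p=s\,\|\bov\|_p=s\,C_S(\boA)\|\boA\bov\|_q=(K(\boA)-1)\,C_S(\boA)\|\boe\|_q.
\]
The residual is $\bor=\boA\tilde\bou-\bof=\boe-s\,\boA\bov$, and the ordinary triangle inequality gives $\|\bor\|_q\le\|\boe\|_q+s\,\|\boA\bov\|_q=K(\boA)\|\boe\|_q$, so $\tilde\bou$ satisfies \eref{eqAufKAuf} and is admissible. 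Taking $p=q=\infty$ and inserting $\|\boe\|_\infty=\|\boc\|_\infty\|u^*\|_S$ turns the displayed identity into $\|\bou^*-\tilde\bou\|_\infty=(K(\boA)-1)C_S(\boA)\|u^*\|_S\|\boc\|_\infty$, which is the asserted lower bound.

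The construction in fact produces equality on the left, so the only genuine hurdle is the attainment used in the consistency step: if the $c(\lambda_k)$ are mere upper bounds rather than sharp constants, $\|\boc\|_\infty$ can overshoot the true worst-case consistency and the lower bound may fail. This sharpness of the $c(\lambda_k)$ is the ``mild additional assumption'' of the theorem; when it holds only approximately, the same argument yields the lower bound up to a factor $1-\varepsilon$ by taking $u^*$ to be a near-maximizer. The decoupling of the three extremal choices --- $\bov$ from $\boA$ alone, $u^*$ from the worst functional, and the scalar $s$ from the triangle inequality --- is what makes the two sides of \eref{eqsharp} collapse to within the harmless spread between $K(\boA)-1$ and $K(\boA)+1$.
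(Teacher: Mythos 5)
Your proof is correct and follows essentially the same route as the paper: fix a stability maximizer $\bov$ with $\|\bov\|_\infty=C_S(\boA)\|\boA\bov\|_\infty$, take $u^*$ to be a worst case for the consistency bound, perturb the exact nodal vector along $\bov$ with the scale factor $(K(\boA)-1)\|\boA\bou^*-\bof\|_\infty$, and check admissibility \eref{eqAufKAuf} by the triangle inequality. The only difference is that where you postulate attainment of the sharp consistency constant as the ``mild additional assumption,'' the paper realizes it concretely by going into a kernel-based context and taking $u^*$ to be the Riesz representer $(\lambda_j-\tilde\lambda_j)^xK(x,\cdot)$ of the worst error functional, for which $|\lambda_j(u^*)-\tilde\lambda_j(u^*)|=c(\lambda_j)\|u^*\|_S$ holds automatically.
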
 
\noindent\begin{proof}
We first take the worst possible value vector $\bou_S$ for stability, satisfying
$$
\|\bou_S\|_\infty=C_S(\boA)\|\boA\bou_S\|_\infty
$$
and normalize it to $\|\bou_S\|_\infty=1$. Then we consider the worst case of
consistency, and we go into a kernel-based context.
\biglf
Let the consistency vector $\boc$ 
attain its norm at some index $j,\;1\leq j\leq N$, i.e.
$\|\boc\|_\infty=c(\lambda_j)$. Then there is a function $u_j\in U_S$  with
$$
|\lambda_j(u_j)-\tilde\lambda_j(u_j)|=c(\lambda_j)\|u_j\|_S=c(\lambda_j)^2,
$$
namely by taking the Riesz representer $u_j:=(\lambda_j-\tilde
\lambda_j)^xK(x,\cdot)$
of the error functional.
The values of $u_j$ at the
nodes form a vector $\bou_j$, and we take the data $f$ as exact values
of $u_j$, i.e. $f_k:=\lambda_k(u_j),\;1\leq k\leq N$ to let $u_j$ play the role
of the true solution $u^*$, in particular $\bou^*=\bou_j$ and
$\|u^*\|_S=\|u_j\|_S=c(\lambda_j)=\|\boc\|_\infty$. 
\biglf
We then define
$\tilde \bou:=\bou^*+\alpha C_S(\boA)\bou_S$  as a candidate for a numerical solution
and check how well it satisfies the system and what its error bound is.
We have
$$ 
\begin{array}{rcl}
\|\boA \tilde \bou -\bof\|_\infty
&=&
\|\boA 
\left(\bou^*+\alpha C_S(\boA)\bou_S\right) -\bof\|_\infty\\
&\leq &
\|\boA \bou^*-\bof\|_\infty+|\alpha|C_S(\boA)\|\boA\bou_S\|_\infty\\
&=& |\alpha|+ \|\boA \bou^*-\bof\|_\infty\\
&=& K(\boA)\|\boA \bou^*-\bof\|_\infty\\
\end{array}
$$
if we choose
$$
\alpha=(K(\boA)-1)\|\boA \bou^*-\bof\|_\infty.
$$ 
Thus $\tilde\bou$ is a valid candidate for numerical solving. 
The actual error is
\bql{eqsharpproof}.
\begin{array}{rcl}
\|\bou^*-\tilde\bou\|_\infty
&=&(K(\boA)-1)\|\boA \bou^*-\bof\|_\infty C_S(\boA)\\
&=&
(K(\boA)-1)C_S(\boA)\max_{1\leq k\leq N}|\lambda_k(u_j)-\tilde\lambda_k(u_j)|\\
&\geq &
(K(\boA)-1)C_S(\boA)|\lambda_j(u_j)-\tilde\lambda_j(u_j)|\\
&=&
(K(\boA)-1)C_S(\boA)\|u_j\|_S\|\boc\|_\infty\\
\end{array} 
\eq
proving the assertion.
\end{proof}\par\noindent
We shall come back to this worst-case construction in the examples
of Section \RSref{SecEx}.
%****************************************************************
\section{Dirichlet Problems}\RSlabel{SecDP}
The above error analysis simplifies for problems where 
Dirichlet values are given on boundary nodes, and where
approximations of differential operators are only needed in
interior points. Then we have $N$ approximations 
of functionals that are based on $M_I$ interior nodes
and $M_B$ boundary nodes, with $M=M_I+M_B$. 
We now use subscriots $I$ and $B$ to indicate vectors
of values on interior and boundary nodes, respectively. The linear system
now is
$$
\boB\bou_I=\bof_I-\boC\bog_B
$$
while the previous section dealt with the full system
$$
 \boA
\left(
\begin{array}{rcl}
\bou_I\\
\bou_B
\end{array}
\right)
=
\left(
\begin{array}{rl}
\bof_I\\ \bog_B
\end{array}
\right)\;\hbox{ with }
\boA=\left( 
\begin{array}{rl}
\boB & \boC\\
0 & \boI_B
\end{array}
 \right)
$$that has trivial approximations on the boundary.
Note that this splitting is standard practice in classical
finite elements when nonzero Dirichlet boundary conditions are given.
We now use the stability constant $C_S(\boB)$ for $\boB$, not for $\boA$,
and examples will show that it often comes out much smaller
than $C_S(\boA)$.
The consistency bounds \eref{eqconserr} stay the same, but they now 
take the form
$$
\|\boB\bou^*_I+\boC\bou_B^*-\bof_I\|_q
=\|\boB\bou^*_I+\boC\bog_B-\bof_I\|_q\leq \|\boc_I\|_q\|u^*\|_S.
$$ 
The numerical method should now guarantee
$$
\|\boB\tilde\bou_I+\boC\bog_B-\bof_I\|_q\leq K(\boB) 
\|\boB\bou_I^*+\boC\bog_B-\bof_I\|_q
$$ 
with a reasonable $K(\boB)\geq 1$. Then the same error analysis applies,
namely
$$
\begin{array}{rcl}
\|\bou^*_I-\tilde\bou_I\|_p
&\leq &
C_S(\boB)\|\boB(\bou^*_I-\tilde\bou_I)\|_q\\
&\leq &
C_S(\boB)\|\boB\bou^*_I-\boC\bog_B-\bof_I\|_q
+C_S(\boB)\|\boB\tilde\bou_I-\boC\bog_B-\bof_I)\|_q\\
&\leq &
C_S(\boB)(1+K(\boB))\|\boB\bou^*_I-\boC\bog_B-\bof_I\|_q\\
&\leq &
C_S(\boB)(1+K(\boB))\|\boc_I\|_q\|u^*\|_S.
\end{array} 
$$
%****************************************************************
\section{Consistency Analysis}\RSlabel{SecCA}
There are many ways to determine the {\em stiffness matrix
  elements}  $a_j(\lambda_k)$ arising in \eref{eqfAu}
and \eref{eqlamalu}, but they are either based on
{\em trial/shape functions} or on
{\em direct discretizations} as 
described in Section
\RSref{SecDis}.
We do not care here which technique is used.
As a by-product, our method will allow to compare 
different approaches on a fair basis.
\biglf
To make the constants $c(\lambda)$ in \eref{eqconserr} 
numerically accessible, we assume that the norm   $\|.\|_S$ 
comes from a Hilbert subspace $U_S$ of $U$ that has a reproducing kernel 
$$
K\;:\;\Omega\times\Omega \to \R.
$$
The squared norm of the error functional $\lambda-\tilde\lambda$
of the approximation $\tilde\lambda$ in \eref{eqlamalu} 
then is the value of the quadratic form
\bql{eqQQ}
\begin{array}{rcl}
Q^2(\lambda,\tilde\lambda)&:=& \|\lambda-\tilde\lambda\|_{U_S^*}^2\\
&=&
\lambda^x\lambda^yK(x,y)-
\displaystyle{2\sum_{j=1}^Ma_j(\lambda) \lambda_j^x\lambda^yK(x,y)  }\\
&&+\displaystyle{\sum_{j,k=1}^M 
a_j(\lambda)a_k(\lambda)\lambda_j^x\lambda_k^yK(x,y)  }  
\end{array}
\eq
which can be explicitly evaluated, though there will be
serious numerical cancellations because the result is small while the
input is not. It provides the explicit error bound
$$
\begin{array}{rcl}
|\lambda(u^*)-\tilde\lambda(u^*)|^2
&\leq & Q^2(\lambda,\tilde\lambda)\|u^*\|_S^2
\end{array}
$$ 
such that we can work with
$$
c(\lambda)=Q(\lambda,\tilde\lambda).
$$ 
As mentioned already, the quadratic form \eref{eqQQ} in its na\"ive form
has an unstable evaluation
due to serious cancellation. In \RScite{davydov-schaback:2015-1},
these problems were partly overcome by % RBF-QR methods and  
variable precision arithmetic, while the paper 
\RScite{larsson-et-al:2013-1} provides a very nice stabilization
technique, but unfortunately  confined to
approximations based on the Gaussian kernel. We hope to be able to
deal with stabilization of the evaluation 
of the quadratic form in a forthcoming paper. 
\biglf
On the positive side, there are cases where these instabilities do not occur,
namely for {\em polyharmonic kernels}. We shall come back to this 
in Section \RSref{SecAbPHK}.
\biglf
Of course, there are many {\em theoretical} results bounding 
the consistency error \eref{eqconserr}, 
e.g. \RScite{mirzaei-et-al:2012-1,davydov-schaback:2015-1} 
in terms of 
$\|u^*\|_S$, with explicit convergence orders in terms of powers of 
{\em fill distances}
$$
h:=\sup_{y\in\Omega}\min_{x_j}\|y-x_j\|_2.
$$
We call there orders {\em consistency orders} in what follows. 
Except for Section \RSref{SecAbPHK}, 
we do not survey such results here, but users can be sure that
a sufficiently fine fill distance and sufficient smoothness of the solution
will always lead to a high consistency order.  Since rates increase when more
nodes  are used, we target $p$-methods, not $h$-methods
in the language of the finite element literature, and we assume sufficient
regularity for this.
\biglf
Minimizing the quadratic form \eref{eqQQ} over the weights
$a_j(\lambda)$ yields
discretizations with {\em optimal} consistency with respect to
the choice of the space $U_S$
\RScite{davydov-schaback:2015-1}. But their calculation may be unstable
\RScite{larsson-et-al:2013-1}
and they usually lead to non-sparse matrices unless users
restrict the used nodes for each single functional. If they are combined with 
a best possible choice of trial functions, namely the Riesz representers
$v_j(x)=\lambda_j^yK(x,y)$ of the test functionals, the resulting 
linear system is symmetric and positive definite, provided that the
functionals are linearly independent. This method is
{\em symmetric collocation} \RScite{fasshauer:1997-1,%
franke-schaback:1998-2a,franke-schaback:1998-1}, and it is
an {\em optimal recovery} method in the space $U_S$
\RScite{schaback:2015-3}. It leads to non-sparse
matrices and suffers from severe instability, but it is error-optimal.
Here, we focus on non-optimal methods that allow sparsity. 
\biglf
Again, the instability of optimal approximations can be avoided using
polyharmonic kernels, and the next section will describe how this works.  
%****************************************************************
\section{Approximations by Polyharmonic Kernels}\RSlabel{SecAbPHK}
Assume that we are working in a context where we know that
the true solution $u^*$ lies in Sobolev space $W_2^m(\Omega)$ for
$\Omega \subset\R^d$, or, by Whitney extension
also in $W_2^m(\R^d)$.  Then the consistency error \eref{eqconserr}
of any given approximation should be evaluated in that space,
and taking an optimal approximation in that space would yield 
a system with optimal consistency.
\biglf
But since the evaluation and calculation of approximations in $W_2^m(\R^d)$
is rather unstable, a workaround is appropriate. Instead of the full norm
in  $W_2^m(\R^d)$ one takes the seminorm involving only the order $m$
derivatives. This originates from early work of Duchon \RScite{duchon:1979-1}
and leads to Beppo-Levi spaces instead of Sobolev spaces 
(see e.g. \RScite{wendland:2005-1}),
but we take a summarizing shortcut here.
Instead of the Whittle-Mat\'ern kernel reproducing $W_2^m(\R^d)$,
the radial {\em polyharmonic} kernel 
\bql{eqKmd}
H_{m,d}(r):=
\left\{
\begin{array}{ll}
(-1)^{\lceil m-d/2\rceil}r^{2m-d},& 2m-d \hbox{ odd}\\ 
(-1)^{1+m-d/2}r^{2m-d}\log r, &2m-d \hbox{ even} 
\end{array} 
\right\}
\eq
is taken, up to a scalar multiple
\bql{eqphfact}
\left\{
\begin{array}{ll}
\dfrac{\Gamma(m-d/2)}{2^{2m}\pi^{d/2}(m-1)!} & 2m-d \hbox{ odd}\\ 
\dfrac{1}{2^{2m-1}\pi^{d/2}(m-1)!(m-d/2)!}&2m-d \hbox{ even} 
\end{array} 
\right\}
\eq
that is used to match the seminorm in Sobolev space $W^m(\R^d)$.
We allow $m$ to be integer or half-integer.
This kernel is {\em conditionally positive definite}
of order $k=\lfloor m-d/2\rfloor+1$, and this has the consequence 
that approximations working in that space 
must be exact on polynomials of al least that order
(= degree plus one). In some sense, this is the price to be paid for
omitting the lower order derivatives in the Sobolev norm,
but polynomial exactness will turn out to be a good feature, not a bug. 
\biglf
As an illustration for the connection between 
the polyharmonic kernel $H_{m,d}(r)$ and the 
Whittle-Mat\'ern kernel
$K_{m-d/2}(r)r^{m-d/2}$ reproducing $W_2^m(\R^d)$, we state 
the observation that (up to
constants)
the polyharmonic kernel arises
as the first term in the expansion of the 
Whittle-Mat\'ern kernel that is not an even power of $r$. 
For instance, up to higher-order terms,
$$
K_{3}(r)r^{3}=16-2r^2+\frac{1}{4}r^4+\frac{1}{24}r^6\log(r)
$$
containing $H_{4,2}(r)=r^6\log(r)$ up to a constant. This seems to hold in
general for $K_n(r)r^n$ and $n=m-d/2$ for integer $n$ and even dimension $d$.
Similarly, 
$$
\frac{1}{\sqrt{2\pi}}K_{5/2}(r)r^{5/2}=3-\frac{1}{2}r^2+\frac{1}{8}r^4
-\frac{1}{15}r^5
$$
contains $H_{4,3}(r)=r^5$ up to a constant, and this generalizes to half-integer
$n$ with $n=m-d/2$. A rigid proof seems to be missing, but the upshot is that
the polyharmonic kernel, if written with $r=\|x-y\|_2$, differs from the
Whittle-Mat\'ern kernel only by lower-order polynomials and higher-order terms,
being simpler to evaluate.
\biglf
If we have an arbitrary approximation \eref{eqlamalu} that is exact
on polynomials of order $k$, we can insert its coefficients $a_j$ into
the usual quadratic form \eref{eqQQ} using the polyharmonic kernel there,
and evaluate the error. Clearly, the error is not smaller than the error of the
optimal approximation using the polyharmonic kernel, and let us denote the 
coefficients of the latter by $a_j^*$. 
\biglf
We now consider {\em scaling}. Due to shift-invariance, we can
assume that we have  a homogeneous
differential operator of order $p$ that is to be evaluated at the origin,
and we use scaled points $hx_j$ for its nodal approximation.
It then turns out \RScite{schaback:2015-1} 
that the optimal coefficients $a_j^*(h)$ scale like
$a_j^*(h)=h^{-p}a_j^*(1)$, and the quadratic form $Q$ of \eref{eqQQ}
written in terms
of coefficients  as
$$
\begin{array}{rcl}
Q^2(a)
&=&
\lambda^x\lambda^yK(x,y)-
\displaystyle{2\sum_{j=1}^Ma_j(\lambda) \lambda_j^x\lambda^yK(x,y)  }\\
&&+\displaystyle{\sum_{j,k=1}^M 
a_j(\lambda)a_k(\lambda)\lambda_j^x\lambda_k^yK(x,y)  }  
\end{array}
$$
scales {\em exactly} like 
$$
Q(a^*(h))=h^{2m-d-2p}Q(a^*(1)),
$$
proving that 
{\em there is no approximation of better order} in that space,
no matter how users calculate their approximation. Note that strong methods
(i.e. collocation) for second-order PDE problems \eref{eqDP} using functionals
\eref{eqlamLmuB} have $p=2$ while the weak functionals of \eref{eqlamweak}
have $p=1$. This is a fundamental difference between weak and strong
formulations, but note that it is easy to have methods of arbitrarily high 
consistency order.
\biglf
In practice, any set
of given and centralized nodes $x_j$ can be blown up to points $Hx_j$
of average pairwise distance 1. Then 
the error and the weights can be calculated for the blown-up situation, and
then the scaling laws for the coefficients and the error are applied
using $h=1/H$. This works for all scalings, without serious instabilities.
\biglf
Now that we know an optimal approximation with a simple and stable scaling,
why bother with other approximations? They will not have a smaller
worst-case consistency error, and they will not always have the scaling 
property  $a_j(h)=h^{-p}a_j(1)$, causing instabilities when evaluating the
quadratic form. If they do have that scaling law, then 
$$
Q(a(h))=h^{2m-d-2p}Q(a(1))\geq h^{2m-d-2p}Q(a^*(1))=Q(a^*(h))
$$
can easily be proven, leading to stable calculation for 
an error that is not smaller than the optimal one. In contrast
to standard results on the error of kernel-based approximations,
we have no restriction like $h\leq h_0$ here, since the scaling law 
is exact and holds for
all $h$.  
\biglf
If the smoothness $m$ for error evaluation is {\em fixed}, it will not pay off
to use approximations with higher orders of polynomial exactness, 
or using kernels with higher smoothness. They cannot beat the optimal
approximations for that smoothness class, and the error bounds of these
are sharp. Special approximations
can be better in a single case, but this paper deals
with worst-case bounds, and then the optimal approximations
are always superior.
\biglf
The optimal approximations can be calculated for small numbers of nodes,
leading to sparse stiffness matrices. One needs enough points to
guarantee polynomial exactness of order $k=\lfloor m-d/2\rfloor+1$.
The minimal number of points actually
needed will depend on their geometric placement. The five-point star
is an extremely symmetric 
example with exactness of order 4 in $d=2$, but this order will 
normally need 
15 points in general position because the dimension of the space of
third-degree polynomials in $\R^2$ is 15. 
\biglf
The upshot of all of this is that, given a fixed
smoothness $m$ and a dimension $d$,  polyharmonic stencils
yield sparse
optimal approximations that can be stably calculated and evaluated.
Examples are in \RScite{schaback:2015-1} and in Section 
\RSref{SecEx} below. See \RScite{iske:2003-1} for an early work 
on stability of 
interpolation by polyharmonic kernels, and \RScite{aboiyar-et-al:2010-1}
for an example of an advanced application.
%****************************************************************
\section{Stability Analysis}\RSlabel{SecSA}
We now take a closer look at the stability constant $C_S(\boA)$ from
\eref{eqCSC}. It can be rewritten as
\bql{eqCASuAu} 
C_S(\boA)=\displaystyle{\sup\{
\|\bou\|_p\;:\;\|\boA\bou\|_q\leq 1\}   }
\eq
and thus $2C_S(\boA)$ is the $p$-norm diameter of the convex set $\{
\bou\in\R^M\;:\;\|\boA\bou\|_q\leq 1\} $.
In the case $p=q=\infty$ that will be particularly important below, 
this set is a polyhedron, and the constant $C_S(\boA)$ can be calculated via linear
optimization. We omit details here, but note that the calculation tends
to be computationally unstable and complicated. It is left to
future research to provide a good estimation technique for the stability
constant
$C_S(\boA)$ like
MATLAB's {\tt condest} for estimating the $L_1$
condition number of a square matrix.
\biglf
In case $p=q=2$ we get
$$
C_S(\boA)^{-1}=\min_{1\leq j\leq M}\sigma_j
$$  
for the $M$ positive singular values $\sigma_1,\ldots,\sigma_M$ 
of $A$, and these are obtainable by
{\em singular value decomposition}.  
\biglf
To simplify the computation, one might calculate
the pseudoinverse $\boA^\dagger$ of $\boA$ and then take the standard
$(p,q)$-norm of it, namely
$$
\|\boA^\dagger\|_{p,q}:=\displaystyle{\sup_{\bov\neq 0}
\dfrac{\|\boA^\dagger\bov\|_p}{\|\bov\|_q}   }. 
$$
This overestimates $C_S(\boA)$ due to
$$
\|\boA^\dagger\|_{p,q}
\geq\displaystyle{\sup_{\bov=\boA\bou\neq 0}
\dfrac{\|\boA^\dagger\boA\bou\|_p}{\|\boA\bou\|_q} 
= \sup_{\bou\neq 0}
\dfrac{\|\bou\|_p}{\|\boA\bou\|_q}  }=C_S(\boA)
$$
since $C_S(\boA)$ is the norm of the pseudoinverse not on all of $\R^N$, but
restricted to
the $M$-dimensional range of $\boA$ in $\R^N$. Here, we again used that
$\boA$ has full rank, thus $\boA^\dagger\boA=I_{M\times M}$.
\biglf
Calculating the pseudoinverse may be as expensive as the numerical solution if
the system \eref{eqLinSys} itself, but if a user wants to have a close grip on
the error, it is worth while. It assures stability of the numerical process,
if not intolerably large, as we shall see.  Again, we hope for future research
to produce an efficient estimator.
\biglf
A simple possibility, restricted to square systems, is to use the fact that
MATLAB's {\tt condest} estimates the 1-norm-condition number, which is the
$L_\infty$ condition number of the transpose. Thus
\bql{eqCStilde}
\tilde C_S(\boA):=\dfrac{{\tt condest}(\boA')}{\|\boA\|_\infty}
\eq
is an estimate of the $L_\infty$ norm of $\boA^{-1}$. This is computationally 
very cheap for sparse matrices and turns out to work fine on the examples in
Section \RSref{SecEx},
but an extension to non-square matrices
is missing.
\biglf
We now switch to theory and want to show that users can expect 
$C_S(\boA)$ to be bounded above independent of the discretization details,
if the underlying problem is well-posed.  To this end, we 
use the approach of \RScite{schaback:2015-2} in what follows.
\biglf
Well-posed analytic problems of the form \eref{eqConPro}
allow a stable reconstruction of $u\in U$ 
from their full set of  data $f_\lambda(u),\;\lambda\in \Lambda$.
This {\em analytic stability} can often be
described as
\bql{equWP}
\|u\|_{WP}\leq C_{WP} \sup_{\lambda\in \Lambda}|\lambda(u)| \fa u\in U,
\eq
where the {\em well-posedness norm} $\|.\|_{WP}$ 
usually is weaker than the norm
$\|.\|_U$. For instance, elliptic second-order 
Dirichlet boundary value problems written in strong form satisfy 
\bql{equUCLuBufirst} 
\|u\|_{\infty,\Omega}\leq \|u\|_{\infty,\partial\Omega}+C \|L
u\|_{\infty,\Omega}
\fa u\in U:=C^2(\Omega)\cap C(\overline{\Omega}),
\eq
see e.g. \cite[(2.3), p. 14]{braess:2001-1}, and this is \eref{equWP} 
for $\|.\|_{WP}=\|.\|_{\infty}$.  
\biglf
The results of \RScite{schaback:2015-2} then show that 
for each trial space $U_M\subset U$ one can find a test set $\Lambda_N$ such
that \eref{equWP} takes a discretized form
$$
\|u\|_{\infty}\leq 2C_{WP} \sup_{\lambda_k\in \Lambda_N}|\lambda_k(u)| \fa u\in U_M,
$$
and this implies
$$
|u(x_j)|\leq 2C_{WP} \sup_{\lambda_k\in \Lambda_N}|\lambda_k(u)| \fa u\in U_M
$$
for all nodal values. 
%If the subspace $U_M$ has a Lagrange basis 
%$u_1,\ldots,u_M$ 
%with respect to $M$ nodes $x_1,\ldots,x_M$, we get
%$$
%|u_i(x_j)|\leq 2C_{WP} \sup_{\lambda_k\in \Lambda_N}|\lambda_k(u_i)|,\;1\leq
%i,j\leq M,
%$$
This proves  % proving 
a uniform 
stability property of the stiffness matrix with entries $\lambda_k(u_i)$.
The functional approximations in \RScite{schaback:2015-2} were of the form
$a_j(\lambda)=\lambda(u_j)$, and then
$$
\begin{array}{rcl}
\|\bou\|_\infty
&\leq& 
2C_{WP} \sup_{\lambda_k\in \Lambda_N}|\lambda_k(u)|\\
&=&
2C_{WP} \sup_{\lambda_k\in \Lambda_N}|\lambda_k\left(\sum_{i=1}^Mu(x_i)u_i\right)|\\
&=&
2C_{WP} \sup_{\lambda_k\in \Lambda_N}|\sum_{i=1}^Mu(x_i)\lambda_k(u_i)|\\
&=&
2C_{WP} \|\boA\bou\|_\infty
\end{array} 
$$
and thus
$$
C_S(\boA) \leq 2C_{WP}.
$$
This is a prototype situation encouraging users to expect reasonably
bounded norms of the pseudoinverse, provided that the 
norms are properly chosen. 
\biglf
However, the situation of 
\RScite{schaback:2015-2} is much more special than here, because it is confined
to the trial function approach.
While we do not even specify trial spaces here, 
the paper  \RScite{schaback:2015-2} 
relies on the condition $a_j(\lambda)=\lambda(u_j)$ for a Lagrange basis
of a trial space, i.e. exactness of the approximations on a chosen trial space.
This is satisfied in nodal methods based on trial spaces, 
but not in direct nodal methods. In particular, it works 
for Kansa-type collocation and MLS-based nodal meshless methods,
but not for localized kernel approximations and direct MLPG techniques
in nodal form.
\biglf
For general choices of  $a_j(\lambda)$, the stability problem
is a challenging research area that is not addressed here. 
Instead, users are asked 
to monitor the row-sum norm of the pseudoinverse numerically and apply
error bounds like \eref{eqthirdstep} for $p=q=\infty$. Note that the choice 
of discrete $L_\infty$ norms is dictated by the well-posedness inequality
\eref{equUCLuBufirst}. As pointed out above, chances are good
to observe numerical stability for well-posed problems,
provided that test functionals are chosen properly.
We shall see this in the examples of Section \RSref{SecEx}.
In case of square stiffness matrices, users can apply \eref{eqCStilde}
to get a cheap and fairly accurate estimate of the stability constant. 
\biglf
For problems in weak form, the well-posedness norm usually
is not $\|.\|_{\infty,\Omega}$ but $\|.\|_{L_2(\Omega)}$, and then we 
might get into 
problems using a nodal basis. In such cases, an $L_2$-orthonormal basis
would be needed for uniform stability, but we refrain from considering weak
formulations here.  
%****************************************************************
\section{Examples}\RSlabel{SecEx}
In all examples to follow, the nodal points are $x_1,\ldots,x_M$
in the domain $\Omega=[-1,+1]^2\subset\R^2$, and parts of them are placed on the boundary.
We consider the standard Dirichlet problem for the Laplacian throughout,
and use testing points $y_1,\ldots, y_n\in \Omega$ for the Laplacian  and
$z_1,\ldots,z_k\in \partial\Omega$ 
for the Dirichlet boundary data
in the sense of \eref{eqlamLmuB}. 
Note that in our error bound \eref{eqthirdstep}
the right-hand sides of problems like \eref{eqDP} do not occur at all.
This means that everything is only dependent on how the discretization works,
it does not depend on any specific choice of $f$ and $g$. 
\biglf
We omit detailed
examples that show how the stability constant $C_S(\boA)$ decreases when 
increasing the number $N$ of test functionals. An example is in
\RScite{schaback:2015-2}, and  \eref{eqCASuAu} shows that stability
must improve if rows are added to $\boA$. Users are urged to make sure that 
their approximations \eref{eqlamku}, 
making up the rows of the stiffness matrix,
have roughly the same consistency order,
because adding 
equations will then  improve stability
without serious change of the consistency error.
\biglf
We first take regular points on a 2D grid of sidelength $h$ in
$\Omega=[-1,+1]^2\subset \R^2$ and interpret all points as nodes.
On interior nodes, we approximate the Laplacian by the usual five-point star
which is exact on polynomials 
up to degree 3 or order 4. On boundary nodes, we take the 
boundary values as given. This yields a square linear system. Since the
coefficients of the 
five-point star blow up like ${\cal O}(h^2)$ for $h\to 0$, 
the row-sum norm of $\boA$ 
and the condition must blow up like ${\cal O}(h^{-2})$, which can easily be
observed. The pseudoinverse does not blow up since the Laplacian part of 
$\boA$ just takes means and the boundary part is the identity.
For the values of $h$ we computed, its norm was bounded by roughly 1.3.
This settles the stability issue from a practical point of view.
Theorems on stability are not needed.
\biglf
Consistency depends on the regularity space $U_S$ chosen. We have 
a fixed classical discretization strategy via the five-point star, 
but we can evaluate the consistency error in different spaces. 
Table \RSref{tab5} shows the results for  Sobolev space $W_2^4(\R^d)$.
It clearly shows linear convergence, and its last column
has the major part of the worst-case relative error bound 
\eref{eqthirdstep}. The estimate $\tilde C_S(\boA)$ from \eref{eqCStilde}
agrees with $C_S(\boA)$ to all digits shown. 
Note that for all methods that need
continuous point evaluations of the Laplacian in 2D,
one cannot work with less smoothness, because the Sobolev inequality 
requires $W_2^m(\R^2)$ with $m>2+d/2=3$. The arguments in Section
\RSref{SecAbPHK} show that the consistency order then is at most 
$m-d/2-p=m-3=1$, as observed. Table \RSref{tab5B} shows 
the improvement 
if one uses the partial matrix $\boB$ of Section \RSref{SecDP}. 
\biglf

\begin{table}[hbt]\centering
\begin{tabular}{||r||r|r|r|r||}%\centering
\hline
$M=N$ & $h$ & $C_S(\boA)$ & $\|\boc\|_\infty$ & $C_S(\boA)\,\|\boc\|_\infty$\\
\hline
25 & 0.5000 & 1.281250 & 0.099045 & 0.126901 \\
81 & 0.2500 & 1.291131 & 0.051766 & 0.066837 \\
289 & 0.1250 & 1.293783 & 0.026303 & 0.034030 \\
1089 & 0.0625 & 1.294459 & 0.013222 & 0.017116 \\
\hline
\hline
\end{tabular}
\caption{Results for five-point star on the unit square, for 
$W_2^4(\R^2)$ and the full matrix $\boA$\RSlabel{tab5}}
\end{table}
\begin{table}[hbt]\centering
\begin{tabular}{||r||r|r|r|r||}%\centering
\hline
$M_I=N_I$ & $h$ & $C_S(\boB)$ & $\|\boc_I\|_\infty$ & $C_S(\boB)\,\|\boc_I\|_\infty$\\
\hline
9 & 0.5000 & 0.281250 &   0.099045 & 0.027856 \\
49 & 0.2500 & 0.291131 &   0.051766 & 0.015071 \\
225 & 0.1250 & 0.293783 &   0.026303 & 0.007727 \\
961 & 0.0625 & 0.294459 &   0.013222 & 0.003893 \\
\hline
\hline
\end{tabular}
\caption{Results for five-point star on the unit square, for 
$W_2^4(\R^2)$ and the partial matrix $\boB$\RSlabel{tab5B}}
\end{table}

\biglf  
We now demonstrate
the sharpness of our error bounds. We implemented the 
construction of Section \RSref{SecSh} for $K(\boA)=2$ 
and the situation in the final
row of Table \RSref{tab5}. This means that, given $\boA$, 
we picked values of $f$ and $g$ 
to realize worst-case stability and consistency, with known
value vectors $\bou^*$ and $\tilde\bou$.
Figure \RSref{figStabConWC} shows the values of $\bou_S$ and $\bou_j=\bou^*$
in the notation of the proof of Theorem \RSref{thesharp},
while Figure \RSref{figWC} displays $\tilde\bou$. 
The inequality 
\eref{eqsharp} is in this case 
$$
0.000226=C_S(\boA)\|u^*\|_S\|\boc\|_\infty\leq
\|\bou^*-\tilde\bou\|_\infty=0.000226\leq 3C_S(\boA)\|u^*\|_S\|\boc\|_\infty=0.000679
$$
and the admissibility inequality \eref{eqAufKAuf} is exactly satisfied with 
$K(\boA)=2$.
Even though this example is worst-case, the residuals 
and the error $\|\bou^*-\tilde\bou\|_\infty$ are small
compared to the last line of Table \RSref{tab5}, and users might 
suspect that the table has a useless
overestimation of the error. But the 
explanation  is that the above
bounds are absolute, not relative, while the norm of the true solution is 
$\|u^*\|_S=\|\boc\|_\infty=0.0132$. 
The relative form of the above bound is
$$
0.0171=
\dfrac{\|\bou^*-\tilde\bou\|_\infty}{\|u^*\|_S}\leq
0.0513,
$$
showing that the 
relative error bound 0.0171 
in Table \RSref{tab5} is attained by a specific example.
Thus our error estimation technique
covers this situation well. The lower bound in the worst-case construction
is attained because
this example  has equality in  \eref{eqsharpproof}.
\biglf
Note that our constructed case combines 
worst-case consistency with worst-case
stability, but in practical situations these two worst cases will
rarely happen at the same time. Figure \RSref{figStabConWC} 
shows that the worst case for stability seems to be 
a discretization of a discontinuous function, and 
therefore it may be that 
practical situations are systematically far away from the worst case.
This calls for a redefinition of the stability constant by 
restricting the range
of $\boA$ in an appropriate way. The worst case for stability arises 
for vectors of nodal values that are close to the eigenvector of the smallest
eigenvalue of $\boA$, but the worst case for consistency might 
systematically have small
inner products with eigenvectors for small eigenvalues.  

\begin{figure}[hbt]%\RSlabel{figsparse}
\begin{center}
\includegraphics[width=5cm,height=5cm]{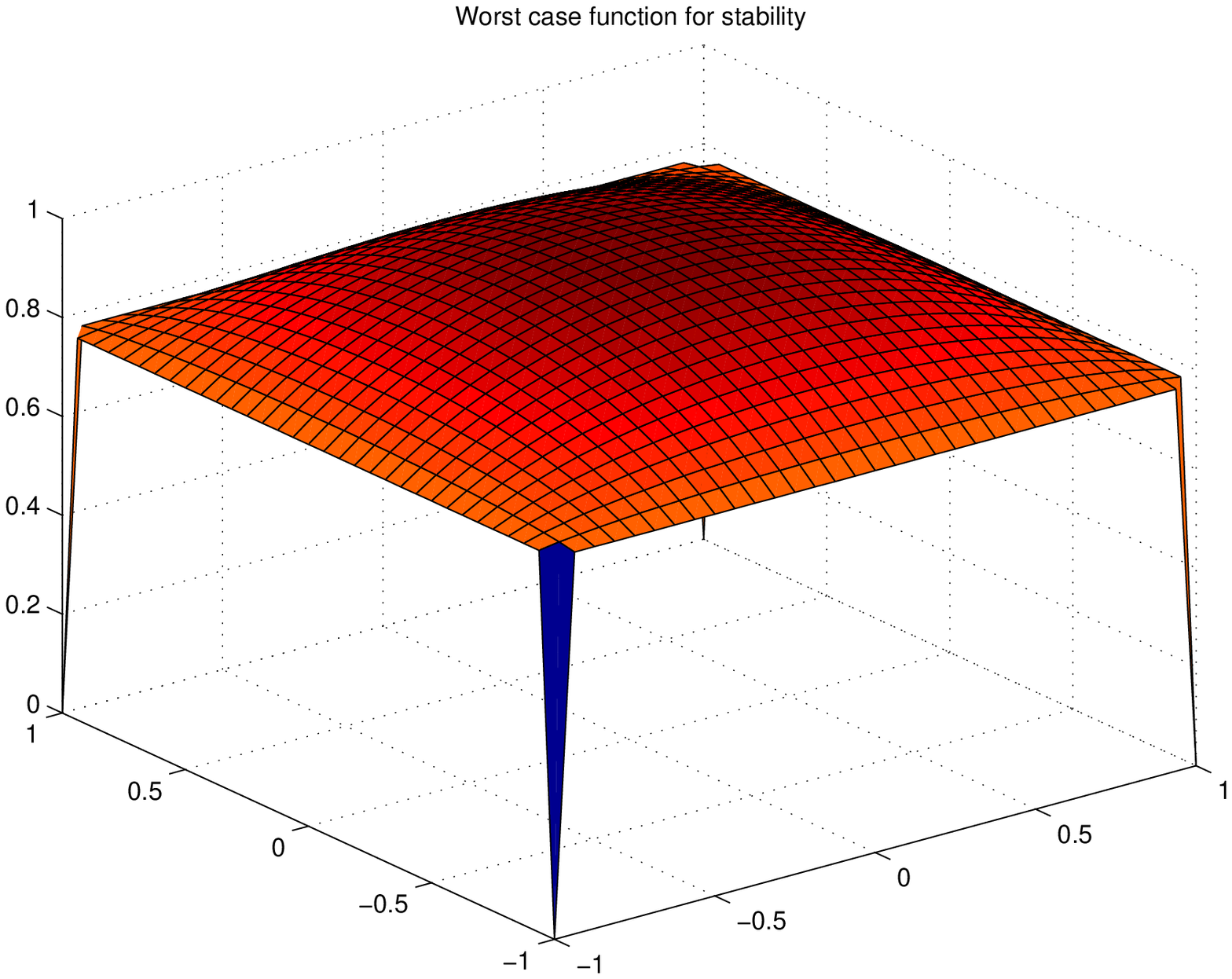}
%\caption{Stability worst case\RSlabel{figStaWC}}
%\end{center} 
%\end{figure} 
%\begin{figure}[hbt]%\RSlabel{figsparse}
%\begin{center}
\includegraphics[width=5cm,height=5cm]{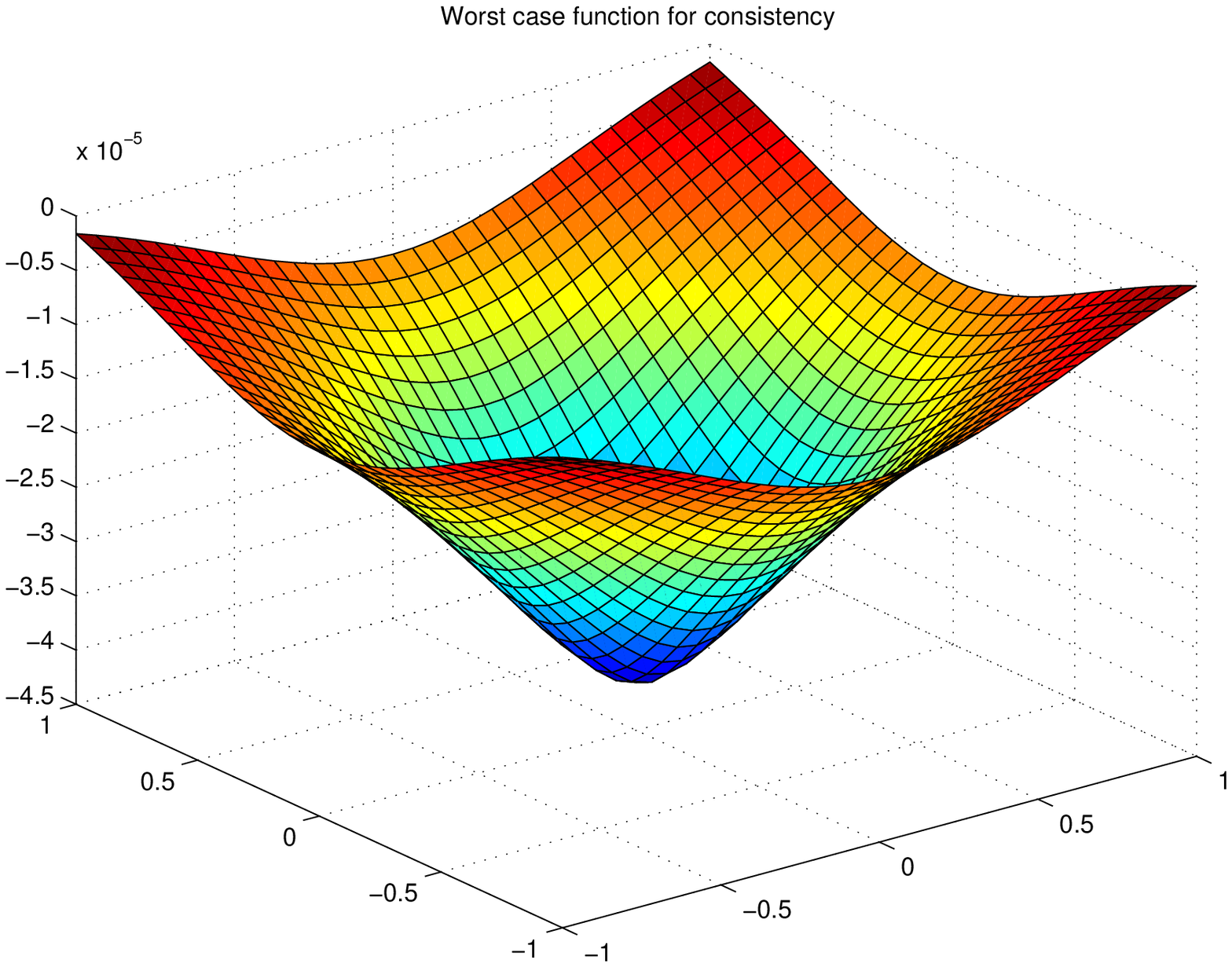}
\caption{Stability and consistency worst case\RSlabel{figStabConWC}}
\end{center} 
\end{figure} 
 
\begin{figure}[hbt]%\RSlabel{figsparse}
\begin{center}
\includegraphics[width=5cm,height=5cm]{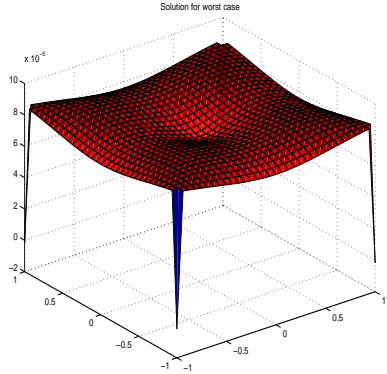}
\caption{Solution for joint worst case\RSlabel{figWC}}
\end{center} 
\end{figure} 

If we take the polyharmonic kernel $H_{4,2}(r)=r^6 \log r$ (up to a constant),
the five-point star is unique and therefore optimal, with consistency order 1,
see Section \RSref{SecAbPHK}.  This means that for given smoothness 
order $m=4$ and gridded nodes,
the five-point star already has the optimal
convergence order. Taking approximations of the Laplacian using
larger subsets of nodes might be exact on higher-order polynomials,
and will have smaller factors if front of the scaling law,
but the consistency and convergence {\em order}  
will not be better, at the expense
of losing sparsity. 
\biglf
To see how much consistency can be gained by using non-sparse
optimal approximations by polyharmonic kernels, we worked at $h=1$,
approximating the error of the Laplacian at the origin by 
data in the integer nodes 
$(m,n)$ with $-1\leq m,n\leq K$ for increasing $K$. This models the case
where the Laplacian is approximated in a near-corner point of the square.
Smaller $h$ can be handled by the scaling law.
The consistency error in $W_2^4(\R^2)$ goes down from 0.07165  to 0.070035  
when going from
25 to 225 neighbors (see Figure 
\RSref{figConsErr}), while 0.08461 is the error of the five-point star
at the origin. The gain is not worth the effort. The optimal stencils
decay extremely quickly away from the origin. This is predicted by
results of \RScite{matveev:1992-1} 
concerning exponential decay of Lagrangians of polyharmonic
kernels, as used successfully in \RScite{hangelbroek-et-al:2015-1}
to derive local inverse estimates. See \RScite{rabut:1992-2} 
for an early reference on polyharmonic 
near-Lagrange functions.

\begin{figure}[hbt]%\RSlabel{figsparse}
\begin{center}
\includegraphics[width=5cm,height=5cm]{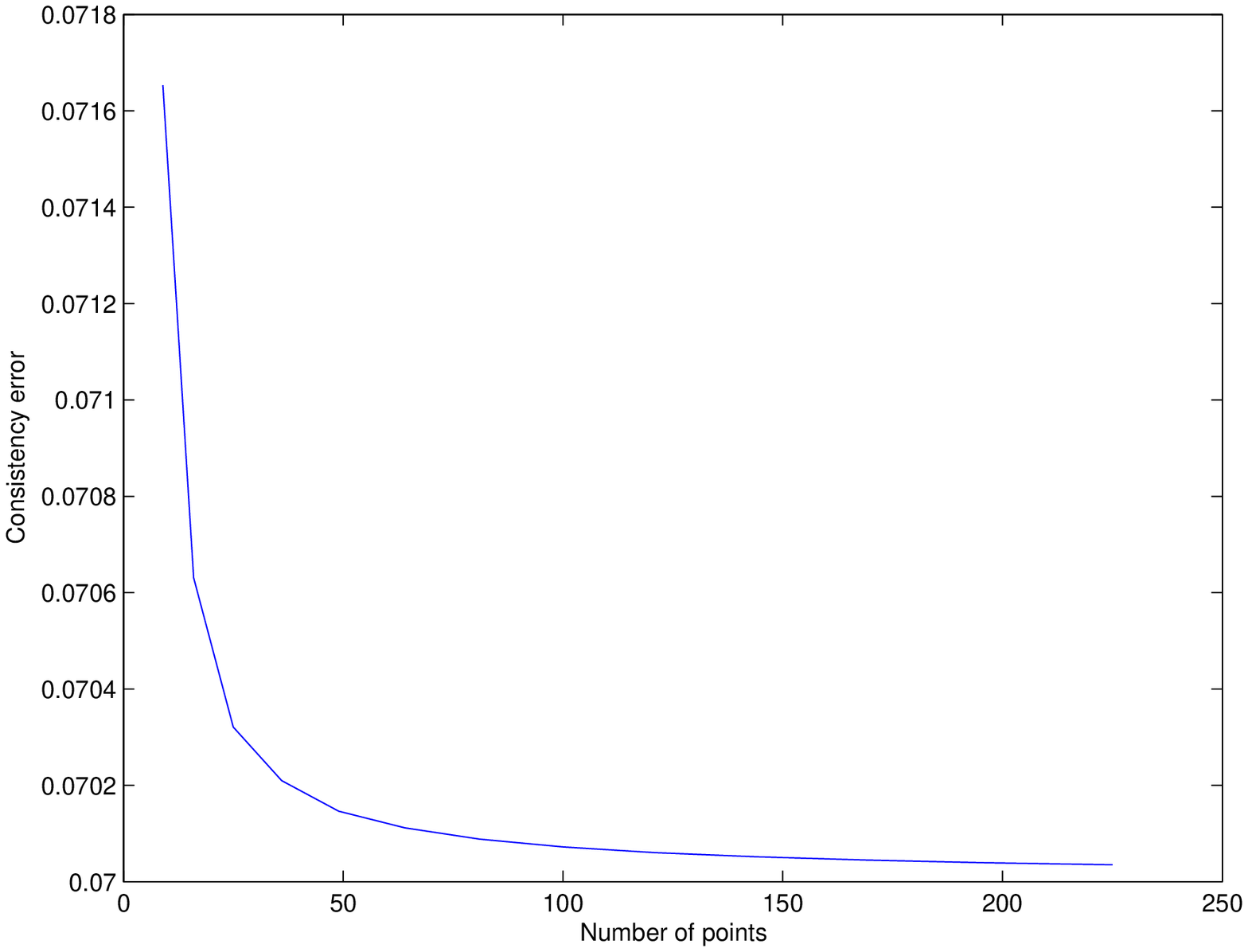}
\includegraphics[width=5cm,height=5cm]{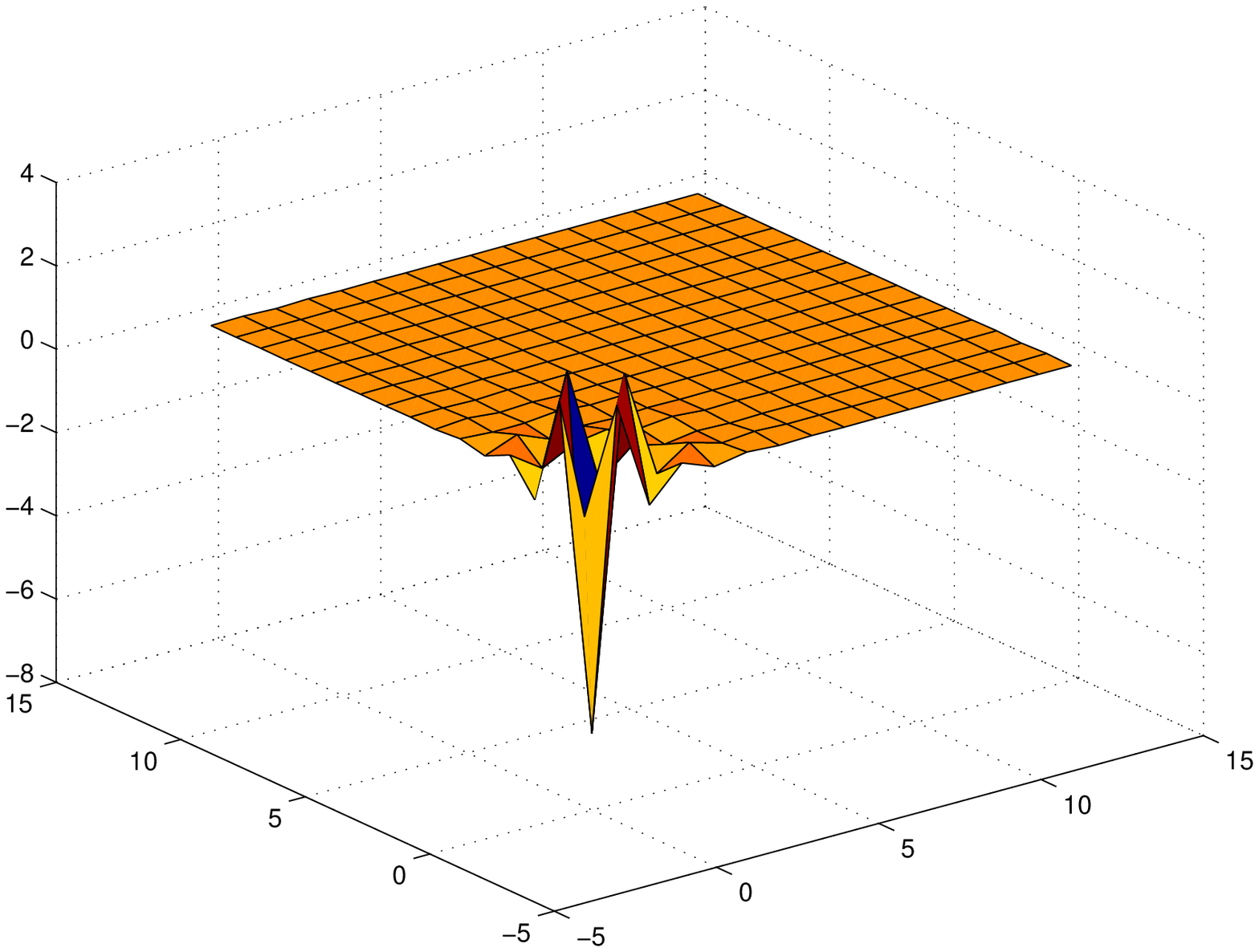}
\caption{Consistency error as a function of points offered,
and stencil 
of optimal approximation for 225 nodes, as a function on the nodes%
\RSlabel{figConsErr}}
\end{center} 
\end{figure} 
\biglf
We now show how the technique of this paper can be used to compare 
very different discretizations, while a smoothness order $m$
is fixed, in the sense that the true solution lies in 
Sobolev space $W_2^m(\Omega)$. Because we have $p$-methods in mind, we take
$m=6$ for the standard Dirichlet problem for the Laplacian in 2D 
and can expect an optimal  
consistency order $m-d/2-2=3$ for a strong discretization. 
Weak discretizations will be at least 
one order better, but we omit such examples. The 
required order of polynomial exactness
when using the polyharmonic kernel is $1+m-d/2=6$, which means 
that one should use at least 21 nodes for local approximations,
if nodes are in general position, without symmetries. The bandwidth of the 
generalized stiffness matrix must therefore be at least 21.
For convenience, we go to the unit square and a regular grid of
meshwidth $h$ first, to define the nodes.
But then we add uniformly distributed noise of $\pm h/4$
to each interior node, keeping the boundary nodes. 
Then we approximate the Laplacian at each interior node
locally by taking $n\geq 25$ nearest neighbor nodes, including boundary nodes,
and set up the reduced 
generalized square stiffness matrix  $\boB$ using the optimal
polyharmonic approximation based on these neighboring nodes. On the boundary, we
keep the given Dirichlet boundary values, following Section \RSref{SecDP}.
\biglf
Table \RSref{tabker30}
shows results for local optimal approximations based on the polyharmonic kernel 
$H_{6,2}(r)=r^{10}\log r$ and $n=30$ nearest neighbors. The stability constant
was estimated via \eref{eqCStilde}, for convenience and efficiency.
One cannot expect to see
an exact $h^3$ behavior in the penultimate column, since the nodes are randomly 
perturbed, but the overall behavior of the error is quite satisfactory. 
The computational complexity is roughly ${\cal O}(Nn^3)$, and note that 
the linear system is not solved at all, because we used MATLAB's {\tt condest}. 
\begin{table}[hbt]
\begin{tabular}{||r|r|r||r|r|r||}
\hline\hline
$N=M$ & $N_I=M_I$ & $h$ & $\tilde C_S(\boB)$ & $\|\boc_I\|_\infty$ & 
$C_S(\boB)\|\boc_I\|_\infty$ \\ \hline
  81 &   49 &  0.2500 &   2.3244 & 0.00075580 & 0.00175682 \\
 289 &  225 &  0.1250 &   0.3199 & 0.00005224 & 0.00001671 \\
1089 &  961 &  0.0625 &   0.2964 & 0.00000872 & 0.00000259 \\
4225 & 3969 &  0.0313 &   0.2961 & 0.00000147 & 0.00000044 \\
\hline\hline
\end{tabular} 
\caption{Optimal polyharmonic approximations using 30 neighbors
\RSlabel{tabker30}}
\end{table}
Comparing with Table \RSref{tabker25}, it pays off to use a few 
more neighbors, and this also avoids instabilities. Users unaware of
instabilities might think they can expect a similar behavior as in Table
\RSref{tabker30} when taking only 25 neighbors, but the third row of Table 
\RSref{tabker25} should teach them otherwise. By resetting
the random number generator, all tables 
were made to work on the same total set of points,
but the local approximations still yield rather different results.
\biglf 
The computationally cheapest way to calculate approximations with the
required polynomial exactness of order 6 on 25 neighbors is to solve
the linear $20\times 25$ system describing polynomial exactness 
via the MATLAB backslash operator. It will return a solution based on 21 points
only, i.e. with minimal bandwidth, but the overall behavior
in Table \RSref{tabpolback25} may not be worth the computational savings,
if compared to the optimal approximations on 30 neighbors.
\begin{table}[hbt]
\begin{tabular}{||r|r|r||r|r|r||}
\hline\hline
$N=M$ & $N_I=M_I$ & $h$ & $\tilde C_S(\boB)$ & $\|\boc_I\|_\infty$ & 
$C_S(\boB)\|\boc_I\|_\infty$ \\ \hline
  81 &   49 &  0.2500 &   8.0180 & 0.00318328 & 0.02552351 \\
 289 &  225 &  0.1250 &  66.7176 & 0.00039055 & 0.02605641 \\
1089 &  961 &  0.0625 & 417.8094 & 0.00003877 & 0.01620053 \\
4225 & 3969 &  0.0313 &  75.5050 & 0.00000663 & 0.00050082 \\
\hline\hline
\end{tabular} 
\caption{Optimal polyharmonic approximations using 25 neighbors
\RSlabel{tabker25}}
\end{table} 

\begin{table}[hbt]
\begin{tabular}{||r|r|r||r|r|r||}
 \hline\hline
$N=M$ & $N_I=M_I$ & $h$ & $\tilde C_S(\boB)$ & $\|\boc_I\|_\infty$ & 
$C_S(\boB)\|\boc_I\|_\infty$ \\ \hline
  81 &   49 &  0.2500 &   9.0177 & 0.00354151 & 0.03193624 \\
 289 &  225 &  0.1250 &  25.6153 & 0.00058952 & 0.01510082 \\
1089 &  961 &  0.0625 &  73.9273 & 0.00005482 & 0.00405249 \\
4225 & 3969 &  0.0313 &  19.6458 & 0.00001186 & 0.00023305 \\
\hline\hline
\end{tabular} 
\caption{Backslash approximation on 25  neighbors
\RSlabel{tabpolback25}}
\end{table} 

\biglf
A more sophisticated kernel-based {\em greedy} technique 
\RScite{schaback:2014-2,schaback:2015-1} 
uses between 21 and 30 points and works its way
through the offered 30 neighbors to find a compromise
between consistency error and support size. Table
\RSref{tabgreedy30} shows the results, with an average of
23.55 neighbors actually used. 

\begin{table}[hbt]
\begin{tabular}{||r|r|r||r|r|r||}
\hline\hline
$N=M$ & $N_I=M_I$ & $h$ & $\tilde C_S(\boB)$ & $\|\boc_I\|_\infty$ & 
$C_S(\boB)\|\boc_I\|_\infty$ \\ \hline
  81 &   49 &  0.2500 &   3.6188 & 0.00104016 & 0.00376411 \\
 289 &  225 &  0.1250 &   0.6128 & 0.00006821 & 0.00004180 \\
1089 &  961 &  0.0625 &   0.3061 & 0.00000961 & 0.00000294 \\
4225 & 3969 &  0.0313 &   0.2980 & 0.00000123 & 0.00000037 \\
\hline\hline
\end{tabular} 
\caption{Greedy polyharmonic approximations using at most 30  neighbors
\RSlabel{tabgreedy30}}
\end{table}  

For these examples, one can plot the consistency error as a function 
of the nodes, and there usually is a factor of 5 to 10 between
the error in the interior and on the boundary. Therefore
it should be better to let the node density increase towards the boundary,
though this may lead to instabilities that may call for overtesting,
i.e. to use $N >> M$. For the same $M$ and $N$ as before, but with
Chebyshev point distribution, see Table \RSref{tabCheb}. The additive noise on
the interior points was 0.01, and we used the greedy method for up to 30
neighbors. This leads to a larger bandwidth near the corners, and to a
consistency error that is now small at the boundary, see Figure
\RSref{figCheb}. The average number of neighbors used was 23.3.
Unfortunately, the scaling laws of stencils go down the drain here,
together with the proven
consistency order, but the results are still unexpectedly good. 
\begin{table}[hbt]
\begin{tabular}{||r|r|r||r|r|r||}
\hline\hline
$N=M$ & $N_I=M_I$ & $h$ & $\tilde C_S(\boB)$ & $\|\boc_I\|_\infty$ & 
$C_S(\boB)\|\boc_I\|_\infty$ \\ \hline
  81 &   49 &  0.2500 & 111.1016 & 0.00433490 & 0.48161488 \\
 289 &  225 &  0.1250 &   0.4252 & 0.00006541 & 0.00002781 \\
1089 &  961 &  0.0625 &   1.2133 & 0.00000677 & 0.00000821 \\
4225 & 3969 &  0.0313 &   0.4353 & 0.00000120 & 0.00000052 \\
\hline\hline
\end{tabular} 
\caption{Greedy polyharmonic approximations using at most 30  neighbors,
but in Chebyshev node arrangement
\RSlabel{tabCheb}}
\end{table}  

\begin{figure}[hbt]%\RSlabel{figsparse}
\begin{center}
\includegraphics[width=5cm,height=5cm]{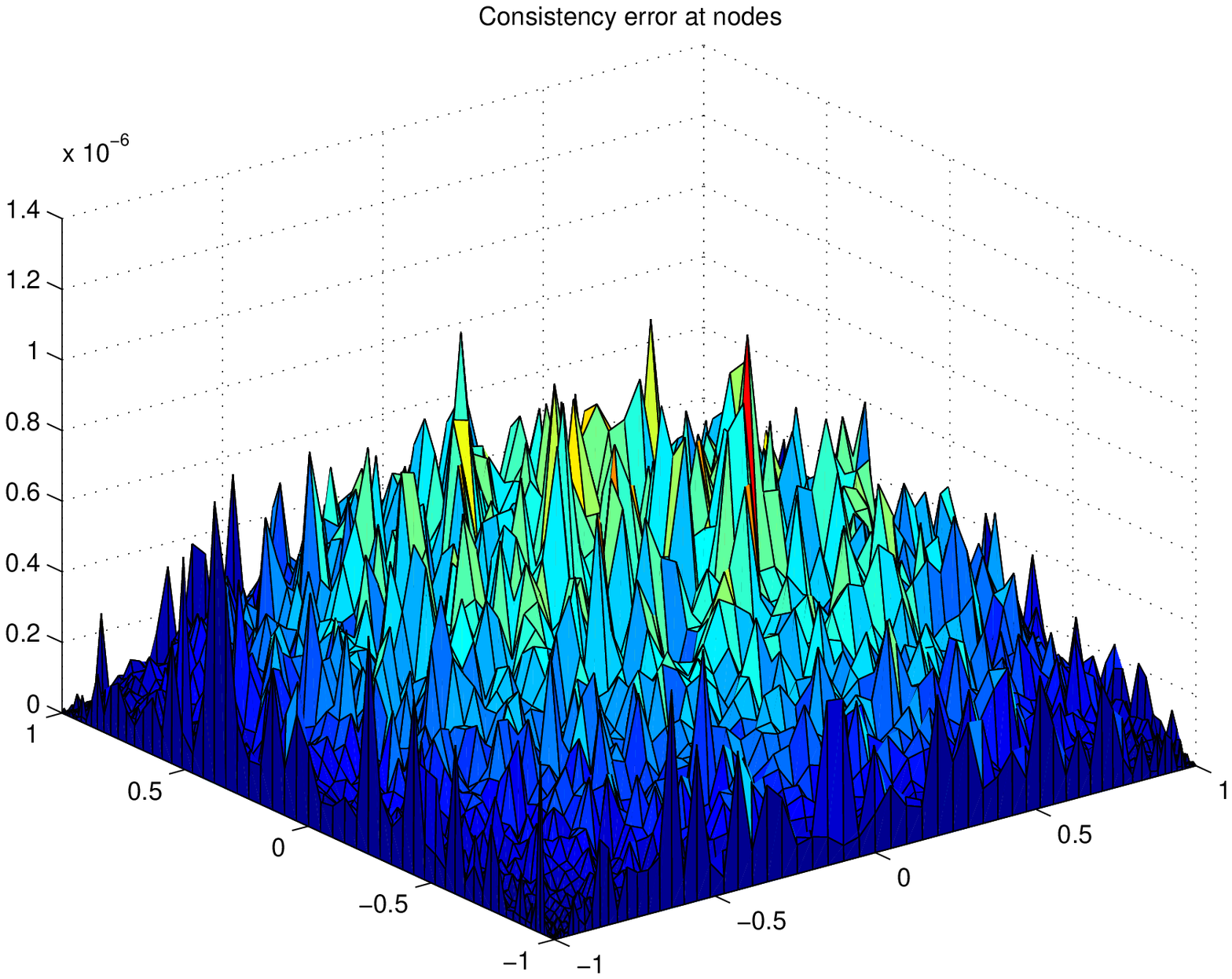}
%\caption{Stability worst case\RSlabel{figStaWC}}
%\end{center} 
%\end{figure} 
%\begin{figure}[hbt]%\RSlabel{figsparse}
%\begin{center}
\includegraphics[width=5cm,height=5cm]{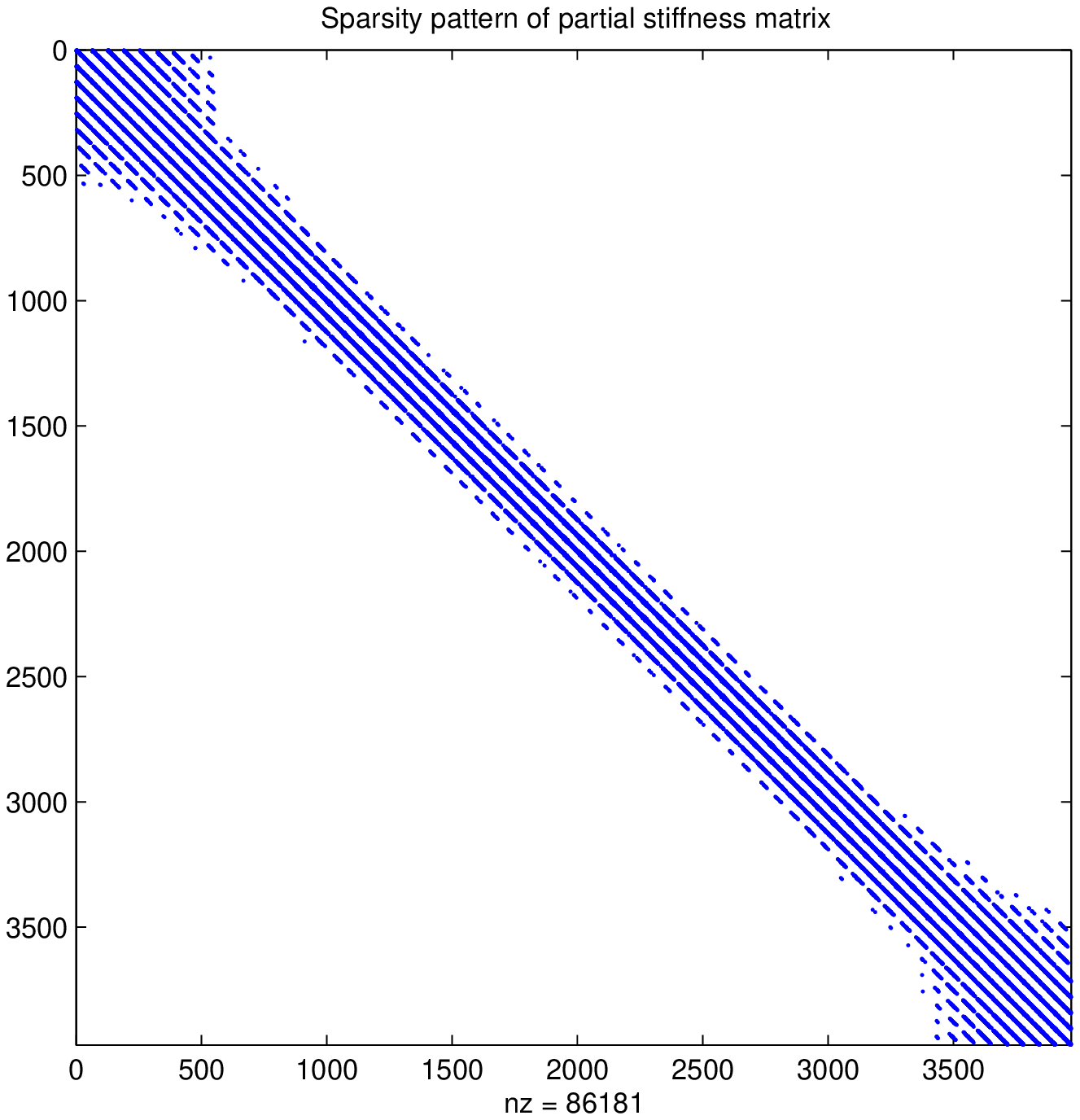}
\caption{Consistency plot and stiffness matrix $\boB$ 
for Chebyshev situation 
\RSlabel{figCheb}}
\end{center} 
\end{figure} 
\biglf
For reasons of space and readability, we provide no examples for
local approximations to weak functionals, and no comparisons with
local approximations obtained via Moving Least Squares or the
Direct Meshless Petrov Galerkin Method.  
%****************************************************************
\section{Conclusion and Outlook}\RSlabel{SecCO}
The tables of the preceding section show that the numerical calculation of
relative error bounds for PDE solving 
in spaces of fixed Sobolev smoothness can be done efficiently 
and with good results. This provides a general tool to evaluate discretizations
in a worst-case scenario, without referring to single examples 
and complicated theorems. Further examples should compare 
a large variety of competing techniques, 
the comparison being fair here as long as
the smoothness $m$ is fixed. 
\biglf
Users are strongly advised to
use the cheap stability estimate \eref{eqCStilde} {\bf anytime} 
to assess the stability of their discretization, if they have a square
stiffness matrix. And, if they are 
not satisfied with the final accuracy,  they should evaluate and plot the
consistency error like in Figure \RSref{figCheb} to see where the discretization
should be refined.   For all of this, polyharmonic 
kernels are an adequate tool.   
\biglf
It is left to
future research to investigate and 
improve the stability estimation technique via \eref{eqCStilde},
and, if the effort is worth while, to prove general theorems 
on sufficient criteria for stability. These will include 
assumptions on the placement of the trial nodes, as well as on
the selection of sufficiently many and well-placed test functionals.
In particular, stabilization by overtesting should work in general, but 
the examples in this paper show that overtesting may not be necessary at all.   
However, this paper serves as a practical workaround, 
as long as there are no theoretical cutting-edge results available.
%****************************************************************
\section*{Acknowledgement}\RSlabel{SecAck}
This work was strongly influenced by helpful discussions and e-mails with
Oleg Davydov and Davoud Mirzaei.
\bibliographystyle{plain}

\end{document}